\newcommand{\such}{:}
\newcommand{\sub}{\subseteq}
\newcommand{\reals}{\mathbb{R}}
\newcommand{\demph}[1]{\textbf{\textup{#1}}}
\newcommand{\mg}[1]{\lvert #1 \rvert}
\newcommand{\R}{\reals}
\newcommand{\vc}[1]{#1} 
\newcommand{\hlf}{\tfrac{1}{2}}
\DeclareMathOperator{\Vol}{Vol}
\newcommand{\incl}{\hookrightarrow}
\newcommand{\N}{\mathbb{N}}
\DeclareMathOperator{\conv}{conv}
\DeclareMathOperator{\diam}{diam}
\newcommand{\abs}[1]{\left\vert #1 \right\vert}
\newcommand{\norm}[1]{\left\Vert #1 \right\Vert}
\newtheorem{thm}{Theorem}[section]
\newtheorem{propn}[thm]{Proposition}
\newtheorem{lemma}[thm]{Lemma}
\newtheorem{cor}[thm]{Corollary}
\newtheorem{remark}[thm]{Remark}
\theoremstyle{nonumberplain}
\newtheorem{proof}{Proof}
\newcommand{\theoremtobeproved}{}
\newtheorem{pfoftheorem}{Proof of \theoremtobeproved}
\newenvironment{pfof}[1]
{
\renewcommand{\theoremtobeproved}{#1}
\begin{pfoftheorem}
}
{\end{pfoftheorem}}
\definecolor{myurlcolor}{rgb}{0.1,0.1,0.8}
\definecolor{mylinkcolor}{rgb}{0.05,0.05,0.4}
\title{Spaces of extremal magnitude}
\author{Tom Leinster%
\thanks{School of Mathematics, University of Edinburgh, Edinburgh EH9 3FD,
  Scotland; Tom.Leinster@ed.ac.uk.}%
\qquad
Mark Meckes%
\thanks{Department of Mathematics, Applied Mathematics, and
  Statistics, Case Western Reserve University, Cleveland, Ohio,
  U.S.A.; mark.meckes@case.edu.}}
\date{\vspace*{-3ex}}
\begin{document}
\sloppy

\maketitle

\begin{abstract}
\noindent
Magnitude is a numerical invariant of compact metric spaces. Its theory is
most mature for spaces satisfying the classical condition of being of
negative type, and the magnitude of such a space lies in the interval $[1,
\infty]$. Until now, no example with magnitude $\infty$ was known. We
construct some, thus answering a question open since
2010. We also give a sufficient condition for the magnitude of a space
to converge to $1$ as it is scaled down to a point, unifying and
generalizing previously known conditions.
\end{abstract}

\section{Introduction}

Magnitude is an invariant defined in the wide generality of enriched
categories and specializing to an invariant of metric
spaces. (See~\cite{MMS}, or \cite{MMSCG} for a survey and~\cite{MAGBIB} for
a bibliography.) It carries abundant geometric information.  For example,
for compact $X \sub \R^N$, consider the function assigning to each $t > 0$
the magnitude of the rescaled space $tX$. The large-scale asymptotics of
this function determine the Minkowski dimension of $X$, its volume, and,
under hypotheses, its surface area (Corollary~7.4 of~\cite{MeckMDC},
Theorem~1 of~\cite{BaCa}, and Theorem~2(d) of~\cite{GiGo}).  Magnitude is
also closely related to certain measures of biodiversity, which themselves
are essentially entropies (\cite{MDBB}, \cite{MEMS} and Chapter~6
of~\cite{ED}).

The definitions are as follows. For a finite metric space $A$, write
$Z_A$ for the matrix $(e^{-d(a, b)})_{a, b \in A}$. If $Z_A$ is
invertible, the \demph{magnitude} $\mg{A}$ of $A$ is the sum of all
the entries of $Z_A^{-1}$. A compact metric space $X$ is
\demph{positive definite} if $Z_A$ is positive definite for all finite
$A \sub X$, and its \demph{magnitude} $\mg{X} \in [0, \infty]$
is then defined as $\sup\{\mg{A} \such \text{finite } A \sub
X\}$. Positive definiteness ensures that this definition is consistent
when $X$ is finite and, as shown in \cite{MeckPDM}, allows the theory
to be developed satisfactorily.

A stronger condition is that $tX$ is positive definite for all
$t > 0$, where $tX$ is shorthand for $X$ equipped with the rescaled
metric $t d_X$; this is equivalent to the classical condition that $X$
is of \demph{negative type} (\cite{MeckPDM}, Theorem~3.3). When $X$ is
a subset of a Banach space equipped with the subspace metric, we can
equivalently instead consider $tX$ to be the usual dilatation of $X$
again equipped with the subspace metric.

Until now, no example was known of a compact positive definite space with
magnitude $\infty$. The question of whether such a space exists was first
raised in the paper~\cite{MeckPDM} (text
preceding Lemma~2.1) posted to the arXiv in 2010 and published in 2013.
It was raised again in~\cite{MeckMDC} (after Definition~3.3), and once
again in~\cite{MMSCG} (as Open Problem~5(1)).

In section \ref{S:infinite} we construct a family of such spaces
$X$. They are moreover of negative type, and we prove not only that
$\mg{X} = \infty$, but also that $\mg{tX} = \infty$ for all $t > 0$.

A complementary question involves the behavior of the magnitude when a
space shrinks to a point. The magnitude of any nonempty positive
definite space lies in the interval $[1, \infty]$, with the lower
bound achieved only by the one-point space. We say that a compact
metric space $X$ has the \demph{one-point property} if
$\lim_{t \to 0^+} \mg{tX} = 1$. Example 2.2.8 of~\cite{MMS}, due to
Willerton, shows that even a finite space of negative type may fail to
have this property. In section~\ref{S:one-point} we prove that a broad
class of compact spaces of negative type do have the one-point
property.  Our result unifies and generalizes some previously known
sufficient conditions, namely that $X$ is isometric to a subset of
$\R^N$ equipped with either the Euclidean metric or the metric induced
by the $1$-norm.

The main tool used to prove both of our main results is Theorem~4.6
in~\cite{MMSCG}, which provides an upper bound, and frequently an exact
formula, for the magnitude of a compact, convex subset of $\ell_1^N$. Here
$\ell_1^N$ denotes $\R^N$ equipped with the metric induced by the
$1$-norm. The new ingredient in both proofs is to
combine the formula for magnitude in $\ell_1^N$ with finite-dimensional
approximations in order to draw conclusions in the infinite-dimensional
spaces $\ell_1$ and $L_1$.

The \demph{$\ell_1$ intrinsic volumes} of a compact, convex set
$A \subseteq \ell_1^N$ are defined by
\[
V_k'(A) = \sum_{1 \leq i_1 < \cdots < i_k \leq N}
\Vol_k\bigl(\pi_{i_1, \ldots, i_k} A\bigr)
\]
where $\pi_{i_1, \ldots, i_k}$ is orthogonal projection onto the subspace
spanned by the standard basis vectors $\vc{e}_{i_1}, \ldots,
\vc{e}_{i_k}$. These quantities were introduced in \cite{IGON}, where it
was shown that there exists a version of integral geometry adapted to the
$1$-norm, with the $\ell_1$ intrinsic volumes playing the role of the
classical intrinsic volumes $V_k$ in Euclidean integral geometry (see
e.g.~\cite{KlRo}).  In fact, this $\ell_1$ integral geometry is valid for
the wider class of $\ell_1$-convex sets (defined in~\cite{IGON}), as are
some of the results in section~\ref{S:one-point} below; but for simplicity,
we state our results for ordinary convex sets only.

The aforementioned Theorem~4.6 in~\cite{MMSCG} is the following.

\begin{thm}
  \label{T:l1-magnitude}
  If $A \subseteq \ell_1^N$ is compact and convex, then
  \begin{equation}
    \label{Eq:l1-magnitude}
    \mg{A} \le \sum_{i=0}^N \frac{1}{2^i} V_i'(A) = \sum_{i=0}^N
    V_i'\bigl(\tfrac{1}{2} A \bigr),
  \end{equation}
  with equality if $A$ has nonempty interior.
\end{thm}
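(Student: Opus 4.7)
My plan is to prove the rightmost equality first, then establish the main claim for axis-aligned boxes by direct calculation, then extend to general convex bodies via valuation theory, and finally handle the inequality for convex sets with empty interior by approximation.

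The second equality in \eqref{Eq:l1-magnitude} is immediate from the definition of $V_k'$: the $k$-dimensional Lebesgue measure of the projection $\pi_{i_1,\ldots,i_k}(\tfrac{1}{2}A)$ equals $2^{-k}\Vol_k(\pi_{i_1,\ldots,i_k}A)$, so $V_k'(\tfrac{1}{2}A) = 2^{-k}V_k'(A)$ summand by summand. The meat of the theorem is the magnitude formula.

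The base case is when $A$ is an axis-aligned box $B = \prod_{j=1}^N[0,\ell_j]$. Because the $\ell_1$ distance is the sum of coordinate distances, $B$ is the $\ell_1$-product of the intervals $[0,\ell_j]$, so $Z_B$ factors as a tensor product of similarity matrices and magnitude is multiplicative: $\mg{B} = \prod_{j=1}^N \mg{[0,\ell_j]} = \prod_{j=1}^N (1+\ell_j/2)$. Expanding the product and recognizing $\sum_{|S|=k}\prod_{j\in S}\ell_j$ as $V_k'(B)$ (since $\pi_S B$ is again a box) gives the claimed equality on the nose. So far, so good.

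The main step is to extend this identity from boxes to arbitrary compact convex sets with nonempty interior. The natural route is to use that both sides of \eqref{Eq:l1-magnitude} are continuous valuations on the space of compact convex subsets of $\ell_1^N$, invariant under coordinate translations and reflections through coordinate hyperplanes, and homogeneous in an appropriate sense after rescaling. Continuity and the valuation property for $\mg{\cdot}$ on convex sets in $\ell_1^N$ come from the inclusion-exclusion behaviour of positive definite matrices on convex unions together with Hausdorff continuity results for magnitude; the corresponding facts for $V_k'$ are straightforward from properties of Lebesgue volume on projections. Given these, a Hadwiger-type characterization of the $\ell_1$ intrinsic volumes from the integral geometry framework of \cite{IGON} should force any such valuation to be a specific linear combination of the $V_k'$. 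The coefficients are then pinned down by the box calculation above, yielding the equality for all convex bodies.

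Finally, for compact convex $A$ with empty interior, I would approximate $A$ from above by a shrinking family of $\ell_1$-bodies $A_\varepsilon$ (e.g.\ thickenings in the $\ell_1$ metric) with $A = \bigcap_\varepsilon A_\varepsilon$, apply the equality to each $A_\varepsilon$, and let $\varepsilon \to 0$. Monotonicity of magnitude for nested compact convex sets in $\ell_1^N$ (which itself follows from the equality in the body case, or from a direct comparison of weightings) gives $\mg{A}\le\mg{A_\varepsilon}$; upper semicontinuity of the projected volumes $V_k'$ under decreasing limits gives $V_k'(A_\varepsilon) \to V_k'(A)$, delivering the inequality. The main obstacle I anticipate is the Hadwiger-type step: confirming that magnitude really is a continuous valuation on convex subsets of $\ell_1^N$, and that the invariance properties it inherits are enough to trigger a uniqueness theorem for the $V_k'$, is where the bulk of the technical work will lie.
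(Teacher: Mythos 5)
First, a point of orientation: the paper does not actually prove this statement --- it is quoted verbatim as Theorem~4.6 of \cite{MMSCG}, so there is no in-paper argument to compare yours against, and I can only assess your sketch on its own terms and against the proof in that reference. Your preliminary and closing steps are sound. The homogeneity $V_k'(\tfrac{1}{2}A) = 2^{-k}V_k'(A)$ is immediate; the box case is correct, since magnitude is multiplicative over $\ell_1$-products (Proposition~3.1.4 of \cite{MMS}), $\mg{[0,\ell]} = 1 + \ell/2$, and expanding $\prod_j (1+\ell_j/2)$ recovers $\sum_k 2^{-k} V_k'$ for a box. The final approximation of a lower-dimensional convex set by outer $\ell_1$-thickenings also works, though monotonicity of magnitude should be invoked as the general fact about compact positive definite spaces rather than ``derived from the equality in the body case,'' which would be circular in spirit.

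The gap is the middle step, and it is not a small one. You assert that magnitude is a continuous valuation on compact convex subsets of $\ell_1^N$, attributing this to ``the inclusion-exclusion behaviour of positive definite matrices on convex unions.'' No such formal behaviour exists: positive definiteness yields no inclusion--exclusion identity for the sum of the entries of an inverse matrix, and magnitude is certainly not a valuation on metric spaces in general. Establishing the valuation property here is, in effect, the content of the theorem: the known proof proceeds by exhibiting explicit weightings (weight measures) for convex bodies in $\ell_1^N$ and verifying that they add correctly under suitable unions --- an argument that uses the geometry of the $1$-norm in an essential way, not a formal property of positive definite kernels. Moreover, even granting the valuation property, the Hadwiger-type theorem of \cite{IGON} characterizes continuous invariant valuations on the class of \emph{$\ell_1$-convex} sets (with additivity required whenever the union is $\ell_1$-convex), not on ordinary convex sets alone; a valuation defined and additive only on ordinary convex sets is not covered by that uniqueness statement, so you would need the valuation property on the larger class in any case. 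Your plan correctly identifies the landmarks --- boxes, valuations, \cite{IGON} --- but the two claims on which it pivots are precisely the ones requiring proof, and neither is supplied.
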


We note that $\sum_{i=0}^N V_i'$ can also be considered an $\ell_1$
analogue of the Wills functional $W = \sum_{i=0}^N
V_i$ (see e.g.~\cite{AGHC}).

\section{Spaces with infinite magnitude}
\label{S:infinite}

As usual, $\ell_1$ denotes the space of real sequences $(x_i)$ whose
1-norm $\sum \mg{x_i}$ is finite, with the metric induced by the
1-norm.
Write $\vc{e}_i$ for the $i$th standard basis vector $(0,
\ldots, 0, 1, 0, \ldots)$ of $\ell_1$ or $\ell_1^N$.

Let $(a_i)$ be a sequence of positive reals converging to $0$, with $\sum
a_i = \infty$. Denote by $X$ the closed convex hull in $\ell_1$ of $\{ a_1
\vc{e}_1, a_2 \vc{e}_2, \ldots\}$, with the subspace metric. Equivalently,
\[
X =
\Bigl\{ (x_1, x_2, \ldots) \such
x_i \geq 0, \ \sum x_i/a_i \leq 1 \Bigr\}.
\]

\begin{thm}
\label{thm:main}
The metric space $X$ is compact and of negative type, and $\mg{tX} =
\infty$ for all $t > 0$.
\end{thm}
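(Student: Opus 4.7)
The plan is to treat the three claims in order, with the main work being the infinite-magnitude statement. Compactness follows because $a_i \to 0$ makes $\{0, a_1\vc{e}_1, a_2\vc{e}_2, \ldots\}$ norm-compact in $\ell_1$, and then Mazur's theorem gives that its closed convex hull $X$ is compact. For negative type, $\ell_1$ is of negative type (a classical fact) and negative type is inherited by subspaces, so $X \sub \ell_1$ is of negative type; by the cited Theorem~3.3 of~\cite{MeckPDM}, $tX$ is then positive definite for every $t > 0$, so $\mg{tX}$ is well-defined.

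For the magnitude claim, I would fix $t > 0$ and exhaust $X$ by the finite-dimensional simplices
\[
X_N = \conv\{0,\, a_1\vc{e}_1,\, \ldots,\, a_N\vc{e}_N\} \sub \ell_1^N \sub \ell_1.
\]
The inclusion $tX_N \incl tX$ is an isometry (distances in $\ell_1^N$ agree with distances in $\ell_1$ on the span of the first $N$ basis vectors), so monotonicity of magnitude under set inclusion---immediate from the definition as a supremum over finite subsets---yields $\mg{tX_N} \le \mg{tX}$ for every $N$. It therefore suffices to show $\mg{tX_N} \to \infty$.

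Each $X_N$ is a full-dimensional simplex in $\ell_1^N$ and hence has nonempty interior, so Theorem~\ref{T:l1-magnitude} applies with equality:
\[
\mg{tX_N} = \sum_{k=0}^N V_k'\bigl(\tfrac{t}{2} X_N\bigr) = \sum_{k=0}^N \Bigl(\tfrac{t}{2}\Bigr)^k V_k'(X_N).
\]
For each $I = \{i_1 < \cdots < i_k\}$, the projection $\pi_I(X_N)$ is the simplex in $\R^k$ with vertices $0$ and $a_{i_j} \vc{e}_{i_j}$, of $k$-volume $a_{i_1}\cdots a_{i_k}/k!$. Dropping all summands except $k = 1$ gives
\[
\mg{tX_N} \ge \tfrac{t}{2} V_1'(X_N) = \tfrac{t}{2} \sum_{i=1}^N a_i,
\]
which diverges as $N \to \infty$ by the hypothesis $\sum a_i = \infty$, so $\mg{tX} = \infty$.

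The one place where something could go wrong is the jump from finite to infinite dimension, but this is avoided entirely by using only monotonicity---no Hausdorff- or Gromov--Hausdorff-continuity of magnitude is invoked, and no control over cross terms or higher $V_k'$ is needed, since the $k = 1$ term alone already diverges. The whole argument really reduces to packaging the existing theorem \ref{T:l1-magnitude} with a coordinate-wise exhaustion of $X$.
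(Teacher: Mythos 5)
Your proof is correct and follows essentially the same route as the paper: exhaust $X$ by the finite-dimensional simplices $X_N$, apply Theorem~\ref{T:l1-magnitude} (in the equality case, keeping only the $V_1'$ term) to get $\mg{tX_N} \ge \tfrac{t}{2}\sum_{i=1}^N a_i \to \infty$, and conclude by monotonicity of magnitude under isometric inclusion. The only cosmetic difference is that the paper first reduces to $t=1$ by observing that $tX$ has the same form as $X$, whereas you carry the factor $t$ through directly.
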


Spaces similar to $X$, but with the $\ell_2$ metric, have been studied in
the geometry of Banach spaces (e.g.\ by Ball and Pajor~\cite{BaPa}).

\begin{proof}
First note that
$\lim_{i \to \infty} a_i \vc{e}_i = 0$, which implies that
$\{a_i \vc{e}_i \such i \geq 1\} \cup \{0\}$ is compact and that
its closed convex hull is $X$. But in a Banach space, the closed convex
hull of a compact set is compact (Theorem~5.35 of~\cite{AlBo}), so $X$ is
compact.

That $X$ is of negative type is immediate, since $\ell_1$ is of
negative type (Theorem~3.6(2) of~\cite{MeckPDM}).

It remains to prove that $\mg{tX} = \infty$ for all $t > 0$. Since $tX$ is
of the same form as $X$, we may assume that $t = 1$.

For $N \geq 1$, write $X_N$ for the convex hull of $\{a_1 \vc{e}_1, \ldots,
a_N \vc{e}_N, \vc{0}\}$ in $\ell_1^N$, with the subspace metric.
Theorem~\ref{T:l1-magnitude} implies that
\[
\mg{X_N}
\geq
\hlf V'_1(X_N)
=
\hlf \sum_{i = 1}^N a_i.
\]
Now $\sum_{i = 1}^\infty a_i = \infty$, so $\mg{X_N} \to \infty$ as $N \to
\infty$.

The standard isometry $\ell_1^N \incl \ell_1$ restricts to an isometry $X_N
\to X$ for every $N$. For compact positive definite spaces, magnitude
is monotone with respect to inclusion, so $\mg{X_N} \leq \mg{X}$ for all
$N$. Hence $\mg{X} = \infty$.
\end{proof}

\begin{remark}
If $(a_i)$ is a sequence of positive reals such that $a_i \to 0$ but $\sum
a_i < \infty$, then $\mg{X} < \infty$. Indeed, $X$ is a subspace of the
infinite-dimensional box $Y = \prod_{i = 1}^\infty [0, a_i]$ in $\ell_1$,
so
\[
  \mg{X} \leq \mg{Y} = \prod_{i=1}^\infty \bigl(1 +
  \tfrac{a_i}{2}\bigr) \le e^{\sum a_i/2} < \infty,
\]
as observed in Open Problem~5(1) of~\cite{MMSCG}. Thus, for
$0$-convergent sequences $(a_i)$, the space $X$ has finite magnitude if and
only if the sum $\sum a_i$ is finite.
\end{remark}

Spaces $X$ of the class considered above clearly have the property that if
$X$ has finite magnitude, then its magnitude function is finite for every
$t > 0$.  This latter phenomenon holds in greater generality, as the
following results show.

\begin{propn}
  \label{T:multiple-pd}
  If $A$ is a positive definite compact metric space and $n \in \N$,
  then $nA$ is positive definite and $\mg{nA} \le \mg{A}^n$.
\end{propn}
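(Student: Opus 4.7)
The plan is to realize $nA$ as an isometric subspace of the $n$-fold $\ell_1$-product $A^{\otimes n} := A \times \cdots \times A$ equipped with the metric $d((a_i), (b_i)) = \sum_{i=1}^n d_A(a_i, b_i)$, and then exploit the multiplicativity of magnitude under this product. The diagonal embedding $\Delta \colon nA \to A^{\otimes n}$, $a \mapsto (a, \ldots, a)$, is isometric since $d_{A^{\otimes n}}(\Delta(a), \Delta(b)) = n\, d_A(a, b) = d_{nA}(a,b)$.

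Next, $A^{\otimes n}$ is compact (a product of compact spaces), and it is positive definite because, for any finite $F_1, \ldots, F_n \sub A$, the similarity matrix $Z_{F_1 \times \cdots \times F_n}$ equals the Kronecker product $Z_{F_1} \otimes \cdots \otimes Z_{F_n}$, which is positive definite as a Kronecker product of positive definite matrices; every finite subset of $A^{\otimes n}$ lies inside such a product, so its similarity matrix is a principal submatrix of a PD matrix and hence PD. Being an isometric subspace of the compact positive definite space $A^{\otimes n}$, the space $nA$ is then itself positive definite, and monotonicity of magnitude under inclusion (immediate from $\mg{X} = \sup\{\mg{F} : F \sub X \text{ finite}\}$) gives $\mg{nA} \le \mg{A^{\otimes n}}$.

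To close, I would show $\mg{A^{\otimes n}} = \mg{A}^n$. Given a weighting $w$ for a finite positive definite $F \sub A$ (so $\sum_{b} e^{-d(a,b)} w(b) = 1$ for each $a \in F$ and $\mg{F} = \sum w$), the function $v(a_1, \ldots, a_n) := w(a_1) \cdots w(a_n)$ on $F^n$ satisfies
\[
\sum_{(b_1, \ldots, b_n)} e^{-\sum_i d(a_i, b_i)}\, v(b_1, \ldots, b_n) = \prod_{i=1}^n \sum_{b_i} e^{-d(a_i, b_i)} w(b_i) = 1,
\]
so $v$ is a weighting for $F^n \sub A^{\otimes n}$ and $\mg{F^n} = (\sum w)^n = \mg{F}^n$. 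Every finite subset of $A^{\otimes n}$ sits in some such $F^n$ with $F \sub A$ finite, so taking suprema yields $\mg{A^{\otimes n}} = \mg{A}^n$, and hence $\mg{nA} \le \mg{A}^n$.

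I expect no serious obstacle: the geometric heart of the matter is the elementary observation that $Z_{nA} = Z_A^{\circ n}$, the $n$th Hadamard power of $Z_A$, is a principal diagonal submatrix of the $n$th Kronecker power $Z_A^{\otimes n} = Z_{A^{\otimes n}}$, and this is incarnated by the diagonal embedding above. The substantive ingredients --- positivity of Kronecker products of PD matrices and the Kronecker-product form of the similarity matrix of $A^{\otimes n}$ --- are standard.
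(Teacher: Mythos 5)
Your proof is correct and follows essentially the same route as the paper: the diagonal isometric embedding $nA \hookrightarrow A^n$ with the $\ell_1$-sum metric, followed by monotonicity and the multiplicativity $\mg{A^n} = \mg{A}^n$. The only difference is that you prove the positive definiteness of the product (via Kronecker products) and the multiplicativity (via product weightings) from scratch, whereas the paper cites these as Lemma~3.1.3 and Proposition~3.1.4 of~\cite{MMS}.
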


\begin{proof}
  The map $x \mapsto (x, \dots, x)$ is an isometric embedding
  $nA \hookrightarrow A^n$, where $A^n$ is given the
  $\ell_1$-sum metric.  Therefore $nA$ is positive definite and
  $\mg{nA} \le \mg{A^n} = \mg{A}^n$, by Lemma 3.1.3 and Proposition
  3.1.4 of \cite{MMS}.
\end{proof}

\begin{cor}
  Suppose that $A$ is a compact and convex subset of a Banach space
  and is positive definite. Then $A$ is of negative type, and
  $\mg{A} < \infty$ if and only if $\mg{tA} < \infty$ for every
  $t > 0$.
\end{cor}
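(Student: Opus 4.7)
The plan is to use convexity of $A$ in the ambient Banach space to embed every rescaled copy $sA$ with $s \in (0, 1]$ isometrically back into $A$. Concretely, I would fix any point $p \in A$ and consider the affine map $f_s \colon a \mapsto (1-s)p + sa$. Since $A$ is convex, $f_s$ maps $A$ into $A$; and since the metric comes from a norm, $\norm{f_s(a) - f_s(a')} = s \norm{a - a'}$, so $f_s$ is a dilation by factor $s$. Hence $f_s(A) \sub A$ with the subspace metric is isometric to $sA$, which shows $sA$ inherits positive definiteness from $A$ and that $\mg{sA} \le \mg{A}$.

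Next, any $t > 0$ can be written as $t = ns$ with $n \in \N$ and $s = t/n \in (0, 1]$ (take $n = \lceil t \rceil$). Rescaling the metric of $sA$ by a factor $n$ returns $tA$, so Proposition~\ref{T:multiple-pd} applied to the positive definite space $sA$ yields that $tA$ is positive definite and
\[
\mg{tA} \le \mg{sA}^n \le \mg{A}^n.
\]
Since $tA$ is positive definite for every $t > 0$, the space $A$ is of negative type.

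For the finiteness equivalence, one direction is immediate: if $\mg{tA} < \infty$ for every $t > 0$, then in particular $\mg{A} = \mg{1 \cdot A} < \infty$. The other direction is the bound just obtained: if $\mg{A} < \infty$, then $\mg{tA} \le \mg{A}^{\lceil t \rceil} < \infty$ for every $t > 0$.

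The only nonroutine step is spotting the self-embedding $f_s \colon sA \hookrightarrow A$ afforded by convexity; once this reduces scaling-down to taking isometric subspaces, Proposition~\ref{T:multiple-pd} handles all scaling-up and the corollary follows with no further work.
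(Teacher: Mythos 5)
Your proof is correct and follows essentially the same route as the paper: the paper translates so that $0 \in A$ and uses convexity to get the set containment $tA \subseteq \lceil t\rceil A$, then applies Proposition~\ref{T:multiple-pd} and monotonicity of magnitude, which is the same combination of ingredients as your self-embedding $f_s$ plus the decomposition $t = \lceil t\rceil \cdot s$. The only difference is trivial bookkeeping (you scale down first and then apply the proposition to $sA$, while the paper applies it to $A$ and uses the containment once).
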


\begin{proof}
  By translation we may assume that $0 \in A$, so by convexity $t_1 A
  \subseteq t_2 A$ whenever $0 \le t_1 \le t_2$, and in particular $t
  A \subseteq \lceil t \rceil A$ for every $t > 0$. By Proposition
  \ref{T:multiple-pd}, $\lceil t \rceil A$ is positive definite and
  therefore $tA$ is as well, and furthermore $\mg{tA} \le
  \mg{\lceil t \rceil A} \le \mg{A}^{\lceil t \rceil} < \infty$.
\end{proof}

\section{The one-point property}
\label{S:one-point}

Write $L_1 = L_1[0,1]$ for the Banach space of measurable functions
$f:[0,1] \to \R$ whose integral $1$-norm $\int \abs{f}$ is finite,
with the metric induced by the $1$-norm. We note that a separable
Banach space is a positive definite metric space (equivalently, of
negative type), with the metric induced by its norm, if and only if it
is isometrically isomorphic to a subspace of $L_1$ (Corollary 3.5 in
\cite{MeckPDM}). Examples include both $\ell_1^N$ and $\R^N$ with the
Euclidean metric.

Our second main theorem is the following.
\begin{thm}
  \label{T:L1}
  Suppose $A$ is a nonempty compact subset of a finite-dimensional subspace of
  $L_1$. Then $\mg{A} < \infty$ and $A$ has the one-point
  property.
\end{thm}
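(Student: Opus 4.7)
The plan is to reduce to the convex case, approximate $A$ inside $\ell_1^{N}$ via dyadic conditional expectations, and then bound the approximating magnitudes by Theorem~\ref{T:l1-magnitude} applied to a coordinate bounding box.

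First, since $\conv(A)$ is still a compact convex subset of the same finite-dimensional subspace $E$, monotonicity of magnitude gives $\mg{tA} \le \mg{t\conv(A)}$, so I would reduce to the case where $A$ is convex. For each $n \ge 1$, let $P_n \colon L_1 \to \ell_1^{2^n}$ be the map $P_n(f) = \bigl(\int_{I_j^n} f\bigr)_{j=1}^{2^n}$ (integration against the dyadic subintervals of $[0,1]$), and set $A_n := P_n(A) \subseteq \ell_1^{2^n}$. Enclose $A_n$ in its coordinate bounding box $B_n = \prod_j [\alpha_j, \beta_j]$ with $\alpha_j = \min_{a \in A}(P_n a)_j$ and $\beta_j = \max_{a \in A}(P_n a)_j$. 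Since the $\ell_1$ intrinsic volumes of a box are elementary symmetric functions of its edge lengths $l_j := \beta_j - \alpha_j$, Theorem~\ref{T:l1-magnitude} yields
\[
\mg{tA_n} \le \mg{tB_n} \le \prod_j\bigl(1 + \tfrac{t}{2} l_j\bigr) \le \exp\Bigl(\tfrac{t}{2}\textstyle\sum_j l_j\Bigr).
\]
The crucial uniform estimate $\sum_j l_j \le \int_0^1 \sup_{c \in A-A}|c(x)|\,dx =: M$ follows from $l_j = \sup_{c \in A-A}\int_{I_j^n} c \le \int_{I_j^n} \sup_{c}|c(\cdot)|$ and summing over $j$. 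Finiteness of $M$ exploits $A - A \subseteq E$ finite-dimensional: fixing measurable representatives of a basis $\{f_i\}$ of $E$, the envelope $\sup_{c \in A - A}|c(x)|$ is dominated by a fixed multiple of $\sum_i |f_i(x)|$, hence integrable.

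The remaining, and most delicate, step is to transfer the bound from $A_n$ to $A$. The pseudo-metric $d_n(x,y) := \|P_n(x-y)\|_{\ell_1}$ on $A$ satisfies $d_n \le d_A$ and $d_n \to d_A$ uniformly on $A \times A$, by the $L_1$ martingale convergence theorem applied to elements of the compact set $A - A$ together with a standard uniform-on-compacta argument. For any finite $S \subseteq A$, continuity of matrix inversion at positive definite matrices gives $\mg{(tS, t d_n)} \to \mg{tS}$; and for $n$ large, $P_n|_S$ is injective, so $(tS, td_n)$ embeds isometrically into $tA_n$ and inclusion-monotonicity gives $\mg{(tS, td_n)} \le \mg{tA_n} \le e^{tM/2}$. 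Letting $n \to \infty$ yields $\mg{tS} \le e^{tM/2}$, and taking the supremum over finite $S \subseteq A$ in the definition $\mg{tA} = \sup_S \mg{tS}$ gives $\mg{tA} \le e^{tM/2}$. This bound is finite for every $t > 0$ (so $\mg{A} < \infty$) and tends to $1$ as $t \to 0^+$; combined with the universal lower bound $\mg{tA} \ge 1$, this gives the one-point property.

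I expect this transfer step to be the main obstacle, because magnitude is not generally continuous under pointwise convergence of metrics on compact spaces; the argument must proceed through the sup-over-finite-subsets characterization, where continuity of magnitude on each fixed finite subset is elementary.
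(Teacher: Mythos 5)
Your proof is correct, but it follows a genuinely different route from the paper's. The paper encloses $A$ in a polytope $P$ with $m$ vertices inside the span of $A$, proves $\mg{tP} \le \exp((m-1)t\diam(P))$ for polytopes in $\ell_1^N$ via the inequality $V_1'(P) \le 2(m-1)\diam(P)$ together with the $\ell_1$ McMullen-type inequality $V_k' \le \tfrac{1}{k!}(V_1')^k$, and then transfers this to $L_1$ by approximating the span in Banach--Mazur distance by subspaces of $\ell_1^{N_n}$ and invoking lower semicontinuity of magnitude under Gromov--Hausdorff convergence. You instead push $A$ forward by the dyadic conditional-expectation maps $P_n$, enclose the image in a coordinate box, and use only the exact product formula $\prod_j(1+\tfrac{t}{2}l_j)$ for boxes from Theorem~\ref{T:l1-magnitude} --- so you need neither the intrinsic-volume inequalities of Lemmas~\ref{T:l1-concavity}--\ref{T:polytope-V1} nor the Gromov--Hausdorff semicontinuity theorem, replacing the latter with an elementary finite-subset argument via continuity of matrix inversion (which is in effect a hands-on proof of the special case of semicontinuity you need, and is sound because $Z_{tS}$ is positive definite and $P_n|_S$ is eventually injective). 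Your approach is more self-contained and yields the explicit constant $M$ in place of $(m-1)\diam(P)$, at the cost of being tied to the concrete dyadic structure of $L_1[0,1]$ and of losing the intermediate bound of Corollary~\ref{T:L1-polytope}, which has independent interest. Two minor points: the pointwise envelope $\sup_{c \in A-A}\abs{c(x)}$ need not be measurable, so $M$ should be defined directly as $C\int_0^1 \sum_i \abs{f_i}$ using the integrable dominating function you exhibit (your argument already only uses this); and the opening reduction to $\conv(A)$ is superfluous, since the bounding-box step uses only monotonicity, not convexity of $A_n$.
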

The rest of this section is devoted to the proof.

The finiteness statement in Theorem \ref{T:L1} was previously proved
(in a less elementary way) in Proposition~4.13 of~\cite{MMSCG},
following special cases proved earlier in \cite{MMS} and
\cite{MeckPDM}. The one-point property for compact subsets of
$\ell_1^N$ was first explicitly noted in Proposition~4.4 of
\cite{MMSCG} (but follows easily from results in
\cite{MMS}). Independent proofs of the one-point property for subsets
of $\R^N$ were given in \cite{BaCa,WillMOBH,MeckMIV}. Theorem
\ref{T:L1} simultaneously generalizes these facts. (In \cite{MeckMIV},
it was further proved that so-called GB-bodies in a Hilbert space have
finite magnitude and the one-point property, with a proof closely
related to the proof of Theorem \ref{T:L1}.)

The proof of Theorem \ref{T:L1} has three main ingredients: a
classical approximation procedure that allows us to reduce
consideration to subspaces of $\ell_1^N$, a bound on magnitude in
terms of $V_1'$ which follows from Theorem \ref{T:l1-magnitude}, and a
dimension-independent bound on $V_1'$ for polytopes. Here, a
\demph{polytope} is the convex hull of a finite set.

\begin{lemma}
  \label{T:l1-concavity}
  If $A \subseteq \ell_1^N$ is convex, then
  \[
    (j+k)!V_{j+k}'(A) \le \bigl(j!V_j'(A) \bigr) \bigl(k!V_k'(A) \bigr)
  \]
  for each $j, k \ge 0$.
\end{lemma}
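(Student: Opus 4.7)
The plan is to expand both sides combinatorially and reduce the inequality to an entirely elementary statement about projections of sets in a product of coordinate subspaces.

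First, I would rewrite the factor $(j+k)!$ on the left by re-indexing. For each index set $I \subseteq \{1,\ldots,N\}$ with $|I| = j+k$ there are $\binom{j+k}{j} = (j+k)!/(j!\,k!)$ ordered decompositions $I = I' \sqcup J'$ with $|I'|=j$ and $|J'|=k$, so
\[
(j+k)!\,V_{j+k}'(A) = j!\,k! \sum_{\substack{I'\cap J'=\emptyset \\ |I'|=j,\,|J'|=k}} \Vol_{j+k}\bigl(\pi_{I'\cup J'} A\bigr).
\]
On the other hand, expanding the product directly gives
\[
\bigl(j!\,V_j'(A)\bigr)\bigl(k!\,V_k'(A)\bigr) = j!\,k! \sum_{\substack{|I|=j,\,|J|=k}} \Vol_j(\pi_I A)\,\Vol_k(\pi_J A),
\]
with no disjointness condition on $I,J$. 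After canceling $j!\,k!$, it suffices to prove
\[
\sum_{\substack{I'\cap J'=\emptyset \\ |I'|=j,\,|J'|=k}} \Vol_{j+k}\bigl(\pi_{I'\cup J'} A\bigr) \le \sum_{\substack{|I|=j,\,|J|=k}} \Vol_j(\pi_I A)\,\Vol_k(\pi_J A).
\]

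Next, I would compare the two sides term by term on the disjoint pairs $(I',J')$ appearing on the left. When $I'$ and $J'$ are disjoint, $\pi_{I'\cup J'}A$ sits inside the product of coordinate subspaces spanned by $I'$ and $J'$, and trivially
\[
\pi_{I'\cup J'}A \subseteq \pi_{I'}A \times \pi_{J'}A,
\]
so by Fubini
\[
\Vol_{j+k}\bigl(\pi_{I'\cup J'}A\bigr) \le \Vol_j(\pi_{I'}A)\,\Vol_k(\pi_{J'}A).
\]
Summing over disjoint $(I',J')$ yields a bound by a sub-sum of the right-hand side above, which is in turn bounded by the full sum (all terms on the right are nonnegative). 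This gives the claimed inequality.

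The main obstacle is really just the bookkeeping in the first step; once the factorials are absorbed into a sum over ordered disjoint pairs of index sets, the remaining projection inequality is a one-line Fubini argument, and convexity plays no essential role beyond ensuring the projections are measurable bounded sets. I do not foresee any genuine difficulty, so the write-up should be short.
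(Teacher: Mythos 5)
Your proposal is correct and is essentially the paper's own argument: both convert the factorials into a sum over ordered tuples/ordered disjoint pairs of coordinate indices, bound each term via $\pi_{I\cup J}A \subseteq \pi_I A \times \pi_J A$ and Fubini, and absorb the remaining (non-disjoint or repeated-index) terms as nonnegative slack on the product side. The only difference is bookkeeping — the paper sums over ordered $(j+k)$-tuples with repetitions (whose summands vanish), while you sum over unordered index sets and count decompositions — so there is nothing substantive to change.
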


\begin{proof}
  By the definition of the $\ell_1$ intrinsic volumes,
  \[
    (j+k)! V_{j+k}'(A) = \sum_{i_1, \dots, i_{j+k} = 1}^N
                  \Vol_{j+k} (\pi_{i_1, \dots, i_{j+k}}(A)),
  \]
  noting that if $i_1, \ldots, i_{j + k}$ are not all distinct then
  the corresponding summand vanishes. Hence
  \begin{align*}
    (j+k)! V_{j+k}'(A)
    & \le \sum_{i_1, \dots, i_{j+k} = 1}^N \Vol_{j+k}
      \bigl(\pi_{i_1, \dots, i_j}(A) \times
    \pi_{i_{j+1}, \dots, i_{j+k}}(A)\bigr) \\
    & = \bigl(j!V_j'(A) \bigr) \bigl(k!V_k'(A) \bigr).
  \end{align*}
\end{proof}

The $j=1$ case of Lemma \ref{T:l1-concavity} implies the following
result by induction.

\begin{propn}
  \label{T:l1-McMullen}
  If $A \subseteq \ell_1^N$ is compact and convex, then
  \[
    V_k'(A) \le \frac{1}{k!} V_1'(A)^k
  \]
  for each $0 \le k \le N$.
\end{propn}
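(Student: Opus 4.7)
The plan is to prove this by induction on $k$, using Lemma \ref{T:l1-concavity} with $j=1$ as the inductive step, exactly as the paper hints.

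For the base cases, $k=0$ gives $V_0'(A) = 1$ (the zero-fold empty sum/product convention applied to the intrinsic volume definition), so the inequality $V_0'(A) \le 1 = \frac{1}{0!} V_1'(A)^0$ is immediate. The case $k=1$ is trivial since both sides equal $V_1'(A)$.

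For the inductive step, suppose the claim holds for some $k$ with $1 \le k < N$, i.e., $V_k'(A) \le \frac{1}{k!} V_1'(A)^k$. Applying Lemma \ref{T:l1-concavity} with $j=1$ gives
\[
  (k+1)! V_{k+1}'(A) \le \bigl(1! V_1'(A)\bigr) \bigl(k! V_k'(A)\bigr) \le V_1'(A) \cdot V_1'(A)^k = V_1'(A)^{k+1},
\]
where the second inequality uses the inductive hypothesis. Dividing by $(k+1)!$ yields $V_{k+1}'(A) \le \frac{1}{(k+1)!} V_1'(A)^{k+1}$, completing the induction.

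There is no real obstacle here; the proposition is a direct consequence of Lemma \ref{T:l1-concavity} combined with a one-line induction. The only thing worth being slightly careful about is the base case $k=0$, which relies on the convention that $V_0'(A) = 1$ for any nonempty $A$ (the empty product/sum giving a single $0$-dimensional projection of volume $1$); compactness and convexity of $A$ are not actually needed for the induction itself, since Lemma \ref{T:l1-concavity} was already stated for convex $A$.
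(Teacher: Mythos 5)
Your proof is correct and matches the paper's intended argument exactly: the paper gives no written proof beyond the remark that the $j=1$ case of Lemma~\ref{T:l1-concavity} yields the proposition by induction, and your write-up is precisely that induction, with the base case $V_0'(A)=1$ handled appropriately.
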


Combining Proposition \ref{T:l1-McMullen} and Theorem \ref{T:l1-magnitude}
we obtain the following.

\begin{cor}
  \label{T:l1-mag-V1}
  If $A \subseteq \ell_1^N$ is compact and convex, then
  \[
    \mg{A} \le \exp(V_1'(\tfrac{1}{2}A)).
  \]
\end{cor}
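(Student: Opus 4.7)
The proof should be a direct combination of the two results just established. The plan is as follows.

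First, I would apply Theorem \ref{T:l1-magnitude} to write
\[
\mg{A} \le \sum_{i=0}^N V_i'\bigl(\tfrac{1}{2}A\bigr).
\]
Next, since $\tfrac{1}{2}A$ is itself compact and convex in $\ell_1^N$, I would invoke Proposition \ref{T:l1-McMullen} with $\tfrac{1}{2}A$ in place of $A$ to bound each term by
\[
V_k'\bigl(\tfrac{1}{2}A\bigr) \le \frac{1}{k!}\, V_1'\bigl(\tfrac{1}{2}A\bigr)^k.
\]

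Substituting this into the previous inequality and then enlarging the finite sum to the full power series gives
\[
\mg{A} \le \sum_{k=0}^N \frac{1}{k!}\, V_1'\bigl(\tfrac{1}{2}A\bigr)^k \le \sum_{k=0}^\infty \frac{1}{k!}\, V_1'\bigl(\tfrac{1}{2}A\bigr)^k = \exp\bigl(V_1'(\tfrac{1}{2}A)\bigr),
\]
which is the claimed bound. All terms are nonnegative, so extending the sum is legitimate.

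There is no real obstacle: the two cited results do all the work, and the only small point to notice is that Proposition \ref{T:l1-McMullen} must be applied to the rescaled set $\tfrac{1}{2}A$ rather than to $A$ directly, so as to match the form of the right-hand side of \eqref{Eq:l1-magnitude}.
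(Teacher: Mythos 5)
Your proposal is correct and is exactly the argument the paper intends: the paper states the corollary as an immediate combination of Theorem \ref{T:l1-magnitude} and Proposition \ref{T:l1-McMullen}, and you have simply written out the details, including the right observation that the proposition should be applied to $\tfrac{1}{2}A$. Nothing further is needed.
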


\begin{remark}
  For the classical intrinsic volumes $V_k$, the estimate
  $V_k \le \tfrac{1}{k!} V_1^k$ analogous to
  Proposition~\ref{T:l1-McMullen} was independently derived by Chevet
  (Lemme~4.2 in~\cite{Chev}) and McMullen (Theorem~2
  in~\cite{McMuIBI}) from the Alexandrov--Fenchel inequalities. As
  noted by McMullen, this implies the bound $W \le \exp(V_1)$ on the
  Wills functional $W = \sum V_k$, analogous to Corollary
  \ref{T:l1-mag-V1}.
\end{remark}

\begin{lemma}
  \label{T:polytope-V1}
  Suppose that $P \subseteq \ell_1^N$ is a polytope with $m$
  vertices.  Then
  \[
    V_1'(P) \le 2(m-1) \diam(P),
  \]
  where $\diam(P)$ is the diameter of $P$ in the $\ell_1$ metric.
\end{lemma}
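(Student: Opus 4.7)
The plan is to exploit the fact that $V_1'$ is a coordinate-wise quantity and that $\ell_1$ distance is a sum of coordinate differences, which sit very compatibly together.

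First I would unfold the definition: since $\pi_i$ is projection onto the $i$th axis and $P$ is a polytope, $\pi_i P = [\min_{v \in \mathrm{vert}(P)} v_i, \max_{v \in \mathrm{vert}(P)} v_i]$, so
\[
V_1'(P) = \sum_{i=1}^N \bigl(\max_{v \in \mathrm{vert}(P)} v_i - \min_{v \in \mathrm{vert}(P)} v_i\bigr).
\]
Next I would fix a single reference vertex $v_1$ of $P$ and, for each coordinate $i$, write the coordinate range as
\[
\max_j (v_j)_i - \min_j (v_j)_i \le |\,(v_{a_i})_i - (v_1)_i\,| + |\,(v_1)_i - (v_{b_i})_i\,|,
\]
where $v_{a_i}, v_{b_i}$ are the vertices attaining the max and min. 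Each summand is at most $\max_{j \neq 1} |(v_j)_i - (v_1)_i|$, producing the factor of $2$.

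Summing over $i$ and swapping $\max_{j\neq 1}$ for $\sum_{j \neq 1}$ (a crude bound but tight enough here), I would get
\[
V_1'(P) \le 2 \sum_{i=1}^N \max_{j \neq 1} |(v_j)_i - (v_1)_i|
\le 2 \sum_{j = 2}^m \sum_{i=1}^N |(v_j)_i - (v_1)_i|
= 2 \sum_{j=2}^m d_1(v_j, v_1).
\]
Since each $d_1(v_j, v_1) \le \diam(P)$ and there are $m-1$ terms in the sum, this yields $V_1'(P) \le 2(m-1)\diam(P)$.

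The whole argument is elementary; there is no real obstacle once one notices that the $\ell_1$ diameter conveniently equals the sum over coordinates of the coordinate differences, so bounding an $\ell_1$ sum coordinate-by-coordinate by a single distance does not lose anything. The only mild subtlety is picking the right order of the bounds --- in particular, dominating $\sum_i \max_{j \ne 1}$ by $\sum_{j \ne 1} \sum_i$ rather than the other way around, so that the inner $\sum_i |(v_j)_i - (v_1)_i|$ reassembles into $d_1(v_j, v_1)$.
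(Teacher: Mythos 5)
Your proof is correct and is essentially the paper's own argument: the paper simply translates so that the reference vertex is at the origin, bounds each coordinate range by $2\max_k\abs{v_k(i)}$, replaces the max by a sum over vertices, and reassembles the coordinates into $\ell_1$ norms bounded by the diameter --- exactly your chain of inequalities with $v_1$ playing the role of the origin. No meaningful difference in approach.
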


\begin{proof}
  By translation, we may assume that one of the vertices of $P$ is at
  the origin.  We write $P = \conv \{v_1, \ldots, v_{m-1}, 0 \}$, set
  $v_m = 0$, and denote $v_k = (v_k(1), \dots, v_k(N))$. Then
  \begin{align*}
    V_1'(P) & = \sum_{i=1}^N \bigl( \max_k v_k(i) - \min_k
              v_k(i)\bigr) \le \sum_{i=1}^N 2 \max_k \abs{v_k(i)} \\
    & \le 2 \sum_{i=1}^N \sum_{k=1}^{m-1} \abs{v_k(i)}
      = 2 \sum_{k=1}^{m-1} \norm{v_k}_1 \le 2(m-1) \diam(P).
  \end{align*}
\end{proof}

Corollary \ref{T:l1-mag-V1} and Lemma \ref{T:polytope-V1}
immediately imply the following.

\begin{cor}
  \label{T:l1-mag-diam}
  If $P \subseteq \ell_1^N$ is a polytope with $m$
  vertices, then
  \begin{equation}
    \label{Eq:l1-mag-diam}
    \mg{P} \le \exp((m-1) \diam(P)).
  \end{equation}
\end{cor}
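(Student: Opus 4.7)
The plan is to apply the two immediately preceding results in tandem, exactly as the statement advertises. First I would invoke Corollary \ref{T:l1-mag-V1} with $A = P$, which gives
\[
  \mg{P} \le \exp\bigl(V_1'(\tfrac{1}{2}P)\bigr).
\]
Then I would estimate $V_1'(\tfrac{1}{2}P)$ by applying Lemma \ref{T:polytope-V1} to the scaled polytope $\tfrac{1}{2}P$. The only thing to observe here is that $\tfrac{1}{2}P$ is again a polytope with (at most) $m$ vertices, namely the images of the vertices of $P$ under the dilation $x \mapsto \tfrac{1}{2}x$.

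Since dilation by $\tfrac{1}{2}$ multiplies $\ell_1$-distances by $\tfrac{1}{2}$, we have $\diam(\tfrac{1}{2}P) = \tfrac{1}{2}\diam(P)$. Lemma \ref{T:polytope-V1} then yields
\[
  V_1'(\tfrac{1}{2}P) \le 2(m-1)\diam(\tfrac{1}{2}P) = (m-1)\diam(P),
\]
and combining this with the bound from Corollary \ref{T:l1-mag-V1} gives~\eqref{Eq:l1-mag-diam}.

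There is no real obstacle: both ingredients are already in place and the only content is bookkeeping around the factor of $\tfrac{1}{2}$. The most one has to watch is that $\tfrac{1}{2}P$ still qualifies as a polytope with $m$ vertices for the application of Lemma \ref{T:polytope-V1}, which is immediate.
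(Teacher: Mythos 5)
Your proposal is correct and matches the paper's argument, which simply states that Corollary \ref{T:l1-mag-V1} and Lemma \ref{T:polytope-V1} immediately imply the result; the only bookkeeping is the factor of $\tfrac{1}{2}$ coming from $\diam(\tfrac{1}{2}P) = \tfrac{1}{2}\diam(P)$ (equivalently, the homogeneity of $V_1'$), which you handle correctly.
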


\begin{remark}
Theorem 6.2 of~\cite{IGON} and Proposition~\ref{T:l1-McMullen} together
imply that
\[
\Vol_N\bigl(P + [0, 1]^N\bigr)
=
\sum_{k = 0}^N V'_k(P)
\leq
\exp(V'_1(P))
\]
for every polytope $P \sub \ell_1^N$. It follows from
Lemma~\ref{T:polytope-V1} that
\[
  \Vol_N\bigl(P + [0,1]^N\bigr) \le \exp(2 (m-1) \diam(P)),
\]
where $m$ is the number of vertices and the diameter is in the $\ell_1$
metric. Despite the classical flavor of this estimate we have not seen it
stated elsewhere.
\end{remark}

\begin{cor}
  \label{T:L1-polytope}
  If $P \subseteq L_1$ is the convex hull of $m$ points, then
  \[
    \mg{P} \le \exp((m-1) \diam(P)).
  \]
\end{cor}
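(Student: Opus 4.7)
The plan is to approximate the $L_1$-polytope $P$ by polytopes lying in finite-dimensional subspaces of $L_1$ isometric to $\ell_1^N$, apply Corollary~\ref{T:l1-mag-diam} to each approximation, and pass to a limit.

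Writing $P = \conv\{v_1, \dots, v_m\}$ with $v_i \in L_1$, the first step uses the density of simple functions: for each $\epsilon > 0$ I would produce $v_1^\epsilon, \dots, v_m^\epsilon \in L_1$ with $\norm{v_i - v_i^\epsilon}_1 < \epsilon$, all constant on the atoms of a common partition of $[0,1]$ into $N = N(\epsilon)$ measurable sets (first approximating each $v_i$ separately, then taking the common refinement). The subspace of $L_1$ consisting of functions constant on these atoms is isometrically isomorphic to $\ell_1^N$: if the $j$th atom has measure $\lambda_j$ and $f$ takes constant value $c_j$ there, send $f$ to $(\lambda_1 c_1, \dots, \lambda_N c_N)$. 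Thus $P^\epsilon := \conv\{v_1^\epsilon, \dots, v_m^\epsilon\}$ is a polytope in (a subspace isometric to) $\ell_1^N$ with at most $m$ vertices, and Corollary~\ref{T:l1-mag-diam} gives
\[
\mg{P^\epsilon} \le \exp\bigl((m-1)\diam(P^\epsilon)\bigr),
\]
with $\diam(P^\epsilon) \to \diam(P)$ as $\epsilon \to 0^+$ since the vertices of $P^\epsilon$ are each within $\epsilon$ of those of $P$.

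Next I would transfer this bound to $P$ itself. For any finite $A \sub P$, express each $x \in A$ as a convex combination $\sum_i t_i(x) v_i$ and set $x^\epsilon := \sum_i t_i(x) v_i^\epsilon \in P^\epsilon$, so that $\norm{x - x^\epsilon}_1 < \epsilon$ uniformly in $x$. For $\epsilon$ sufficiently small, $A^\epsilon := \{x^\epsilon : x \in A\}$ has the same cardinality as $A$, and by monotonicity of magnitude on positive definite spaces $\mg{A^\epsilon} \le \mg{P^\epsilon}$. Letting $\epsilon \to 0^+$ and then taking the supremum over finite $A \sub P$ would yield $\mg{P} \le \exp\bigl((m-1)\diam(P)\bigr)$.

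The main obstacle is the continuity step in that final limit: I need $\mg{A^\epsilon} \to \mg{A}$. Since $L_1$ is of negative type, both $A$ and each $A^\epsilon$ are positive definite, so their similarity matrices $Z_A$ and $Z_{A^\epsilon}$ are invertible; entrywise convergence $Z_{A^\epsilon} \to Z_A$ (from convergence of pairwise distances) then forces the inverses to converge, hence the sum of entries — the magnitude — to converge as well. The approximation of $L_1$ functions by simple functions constant on a common partition, and the identification of that subspace with $\ell_1^N$, are both classical and should present no difficulty.
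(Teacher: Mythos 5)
Your proof is correct, and it follows the same overall strategy as the paper's---approximate $P$ by polytopes with at most $m$ vertices in $\ell_1^N$, apply Corollary~\ref{T:l1-mag-diam}, and pass to the limit---but it implements both steps differently and more elementarily. For the approximation, the paper invokes the classical fact that every finite-dimensional subspace of $L_1$ is close in Banach--Mazur distance to a subspace of some $\ell_1^N$, and deduces Gromov--Hausdorff convergence of approximating polytopes; you instead construct the approximants concretely by replacing the vertices with simple functions measurable with respect to a common finite partition and observing that the resulting sublattice of $L_1$ is isometric to $\ell_1^N$, which is a transparent, self-contained proof of the same fact. For the limit, the paper cites lower semicontinuity of magnitude with respect to Gromov--Hausdorff convergence (Theorem~2.6 of \cite{MeckPDM}), whereas you re-derive exactly the special case needed: for each finite $A \subseteq P$ you produce a nearby finite $A^\epsilon \subseteq P^\epsilon$, bound $\mg{A^\epsilon}$ by monotonicity, and use continuity of matrix inversion at the invertible (positive definite) matrix $Z_A$ to get $\mg{A^\epsilon} \to \mg{A}$, finally taking the supremum over $A$. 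The paper's route is shorter because it leans on two known results; yours is longer but self-contained and makes the mechanism behind the lower-semicontinuity argument explicit. The only details worth spelling out are ones you already flag or that are routine: the points of $A^\epsilon$ remain distinct for small $\epsilon$ (so the matrices have matching size), and $\diam(P^\epsilon) \to \diam(P)$, which follows since the map $\sum_i t_i v_i \mapsto \sum_i t_i v_i^\epsilon$ shows that the Hausdorff distance between $P$ and $P^\epsilon$ is at most $\epsilon$.
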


\begin{proof}
  Let $E \subseteq L_1$ be the linear span of $P$.  It is well known
  (e.g.~\cite{TalaESL}, section~1) that $E$ can be approximated in the
  Banach--Mazur distance by a sequence of subspaces $E_n \subseteq
  \ell_1^{N_n}$.  It follows (as in section~3.A of~\cite{GromMSR}) that $P$
  is the limit, in the Gromov--Hausdorff distance, of a sequence of
  polytopes $P_n \subseteq \ell_1^{N_n}$, each with at most $m$ vertices.

  The magnitude of compact positive definite metric spaces is
  lower semicontinuous with respect to the Gromov--Hausdorff
  distance \cite[Theorem 2.6]{MeckPDM}, and diameter is continuous. By
  Corollary \ref{T:l1-mag-diam} we therefore have
  \[
    \mg{P} \le \liminf_{n \to \infty} \mg{P_n} \le \liminf_{n\to
      \infty} e^{(m-1) \diam(P_n)} = e^{(m-1) \diam(P)}.
  \]
\end{proof}

\begin{pfof}{Theorem \ref{T:L1}}
  Let $P$ be a polytope lying in the linear span of $A$ and
  containing $A$.  Then for each $t > 0$,
  \[
    1 \le \mg{tA} \le \mg{tP} \le \exp\bigl((m-1)t\diam(P)\bigr),
  \]
  where $m$ is the number of vertices of $P$. The theorem follows.
\end{pfof}

\paragraph{Acknowledgements} TL was supported in part by a
Leverhulme Research Fellowship. MM's research was supported in part by
Collaboration Grant 315593 from the Simons Foundation.  This work was
partly done while MM was visiting the Mathematical Institute of the
University of Oxford, partially supported by ERC Advanced Grant 740900
(LogCorRM) to Prof.\ Jon Keating and Simons Fellowship 678148 to Elizabeth
Meckes.  MM thanks the Institute and Prof.\ Keating for their hospitality.

\bibliography{mathrefs}

\end{document}